\newcommand{\PP}{\mathbb{P}}
\newcommand{\OO}{\mathcal{O}}
\newcommand{\uU}{\mathcal{U}}
\newcommand{\Sym}{\operatorname{Sym}}
\theoremstyle{plain}
\newtheorem{lemma}{Lemma}[section]
\newtheorem*{theorem*}{Theorem}
\newtheorem*{lemma*}{Lemma}
\newtheorem*{proposition*}{Proposition}
\newtheorem*{conjecture*}{Conjecture}
\newtheorem*{corollary*}{Corollary}
\newtheorem*{problem*}{Problem}
\newtheorem{theorem}[lemma]{Theorem}
\newtheorem{corollary}[lemma]{Corollary}
\newtheorem{proposition}[lemma]{Proposition}
\theoremstyle{definition}
\newtheorem{definition}[lemma]{Definition}
\newtheorem{remark}[lemma]{Remark}
\begin{document}

\title{Effective bounds on ampleness of cotangent bundles}
\author[I. Coskun]{Izzet Coskun}
\address{Department of Mathematics, Statistics and CS \\UIC, Chicago, IL 60607}
\email{coskun@math.uic.edu}

\author[E. Riedl]{Eric Riedl}
\address{Department of Mathematics, 255 Hurley, Notre Dame, IN 46556 }
\email{eriedl@nd.edu}

\subjclass[2010]{Primary: 14M10. Secondary: 14F10, 32C38}
\keywords{Complete intersections, ample cotangent bundle}
\thanks{During the preparation of this article the first author was partially supported by the  NSF grant DMS-1500031 and NSF FRG grant  DMS 1664296 and  the second author was partially supported  by the NSF RTG grant DMS-1246844.}

\begin{abstract}
We prove that a general complete intersection of dimension $n$, codimension $c$ and type $d_1, \dots, d_c$ in $\PP^N$ has ample cotangent bundle if $c \geq 2n-2$ and the $d_i$'s are all greater than a bound that is $O(1)$ in $N$ and quadratic in $n$. This degree bound substantially improves the currently best-known super-exponential bound in $N$ by Deng, although our result does not address the case $n \leq c < 2n-2$.
\end{abstract}

\maketitle

\section{Introduction}
Let $X$ be a general complete intersection in $\PP^N$ of dimension $n>1$ and type $d_1, \dots, d_c$. In this note, we prove that if $c \geq 2n-2$ and 
 $$d_i \geq   \frac{(2n-2)(24n-28)}{N-3n+3}+2,$$ then the cotangent bundle $\Omega_X$ is ample.

Debarre  conjectured that a general complete intersection $X \subset \PP^N$ with $c \geq n$ has ample cotangent bundle provided that the degrees $d_i$ defining $X$ are sufficiently large \cite{Debarre}. Debarre's Conjecture has been proven by both Brotbek and Darondeau \cite{BrotbekDarondeau} and Xie \cite{Xie}. Brotbek and Darondeau do not provide effective bounds, while Xie showed that one can take $d_i \geq N^{N^2}$ to guarantee that $\Omega_X$ is ample \cite{Xie}. Deng in \cite{Deng1, Deng2} improved the bounds to $d_i \geq 16c^2 (2N)^{2N+2c}$. 

When $c \geq 2n-2$, our bounds are vast improvements on these exponential bounds. In fact, our bound is 3 as soon as $N \geq 48n^2 -101n +53$. In earlier work, Brotbek \cite{BrotbekExp} proved that if $c \geq 3n -2$ and all the degrees are equal $d_i =d$, then $\Omega_X$ is ample provided that $d \geq 2N+3$. While our bound is less restrictive on $c$ and is better for $N$ large with respect to $n$, in the case $c \geq 3n-2$, $d_i = d$ for all $i$, and $N$ small relative to $n$, Brotbek's bound of $2N+3$ is better. Finally, Brotbek in \cite{BrotbekCot} showed that a general complete intersection surface has ample $\Omega_X$ if $d_i \geq \frac{8N+2}{N-3}$. 

The first step is to clarify and improve Brotbek's \cite{BrotbekCot} estimates that guarantee that $\Omega_X$ is ample outside a codimension 2 subvariety. We use a more careful combinatorial analysis and a theorem of Darondeau. This sets up a new application of a technique of Riedl and Yang \cite{RiedlYang, RiedlYang2}, which allows us to remove the non-ample locus. This process loses a little on the codimension bound relative to \cite{BrotbekDarondeau}, but gives much better bounds on the degrees.

\subsection*{Organization of the paper} In \S \ref{sec-prelim}, following Brotbek \cite{BrotbekCot}, we obtain degree bounds that guarantee that $\Omega_X$ is ample outside a variety of codimension 2. In \S \ref{sec-main}, using the technique of Riedl and Yang \cite{RiedlYang, RiedlYang2}, we show how to remove the non-ample locus.

\subsection*{Acknowledgments} We thank Damian Brotbek, Lionel Darondeau, Lawrence Ein, Mihai P\u{a}un and David Yang for enlightening discussions.

\section{Ampleness outside a codimension $2$ set}\label{sec-prelim}
Let $X \subset \PP^N$ be a general complete intersection of dimension $n$ and type $d_1, \dots, d_c$. We always assume that the codimension $c = N-n \geq n$. Let $\Omega_X$ denote the cotangent bundle of $X$ and let $$\pi: \PP \Omega_X \rightarrow X$$ be the natural projection. In this section, we give bounds on the degrees $d_i$ that guarantee that $\Omega_X$ is ample outside of a codimension $2$ set. We follow the basic strategy from Brotbek \cite{BrotbekCot} closely.  However, using a more careful analysis of the combinatorics and a new theorem of Darondeau, we improve his  bounds, which are exponential in $n$, significantly.  

\begin{definition}
Let $E$ be a vector bundle on a projective variety $Y$ and let $H$ be an ample line bundle on $Y$. Let $\pi: \PP(E) \rightarrow Y$ denote the projection. If for some $\epsilon$ with $0< \epsilon \ll 1$, any irreducible curve $C \subset \PP(E)$ with $C \cdot \OO_{\PP(E)}(1)  < \epsilon \  C \cdot \pi^* H$ satisfies $\pi(C) \subset T$,  then $E$ is said to be {\em ample outside $T \subset Y$}.
\end{definition}

It follows from the definition that if $\Sym^k E$ is globally generated outside of a subvariety $T$ of $Y$ for some $k > 0$, then $E \otimes H$ is ample outside of $T$. In \cite{BrotbekCot}  Brotbek proves the following theorem.

\begin{theorem}\cite[Theorem 4.5, Corollary 4.7]{BrotbekCot}\label{thm-BrotbekCot}
Let $X \subset \PP^N$ be a general complete intersection of dimension $n$ and type $d_1, \dots, d_c$. If $\OO_{\PP \Omega_X}(1) \otimes \pi^* \OO_{X}(-a-N)$ on $\PP \Omega_X$ is big, then the projection of the stable  base locus of $\OO_{\PP \Omega_X}(1) \otimes \OO(-a)$ under $\pi$ has codimension at least 2 in $X$. Thus, if $\OO_{\PP \Omega_X}(1) \otimes \pi^* \OO_{X}(-N-1)$ is big, then $\Omega_X$ is ample outside an algebraic set $Y$ of codimension at least 2 in $X$, where $Y$ is the image under $\pi$ of the stable base locus of $\OO_{\PP \Omega_X}(1) \otimes \pi^* \OO_{X}(-1)$.
\end{theorem}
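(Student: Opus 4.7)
The plan is to prove the first part — that the projection under $\pi$ of the stable base locus $B(L)$ of $L := \OO_{\PP\Omega_X}(1) \otimes \pi^*\OO_X(-a)$ has codimension at least $2$ — given bigness of $L' := L \otimes \pi^*\OO_X(-N)$. The first step is the standard containment $B(L) \subset B(L')$ arising from $L = L' \otimes \pi^*\OO_X(N)$ and the multiplication map $H^0(L'^m) \otimes H^0(\OO_X(mN)) \to H^0(L^m)$, $(s',h) \mapsto s' \cdot \pi^*h$: for any $p \notin B(L')$ one picks $s' \in H^0(L'^m)$ nonvanishing at $p$ and, using very ampleness of $\OO_X(mN)$, an $h$ nonvanishing at $\pi(p)$, producing a section of $L^m$ nonzero at $p$. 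Bigness of $L'$ then makes $B(L)$ a proper closed subset of $\PP\Omega_X$.

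To upgrade to a codimension-$2$ bound on $\pi(B(L))$ I would argue by contradiction: suppose $V \subset \pi(B(L))$ is an irreducible codimension-$1$ component of $X$. The goal becomes producing, for some $m$, a section of $L^m$ nonvanishing at each point of $\pi^{-1}(V) \cap B(L)$. Here the cited theorem of Darondeau enters: for a general complete intersection of the stated type it furnishes sections of $\pi_*(L'^m) = \Sym^m \Omega_X \otimes \OO_X(-(a+N)m)$ with sufficient non-vanishing behavior along $V$ (this is the fine positivity input that the paper later combines with careful combinatorics). Pulling such a section back to $\PP\Omega_X$ and multiplying by $h \in H^0(\OO_X(mN))$ chosen not to vanish identically on $V$ — possible since $V$ is proper and $\OO_X(mN)$ very ample — yields the required section of $L^m$, contradicting $V \subset \pi(B(L))$. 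I expect this Darondeau step to be the main obstacle, since it is precisely the sharp version of positivity of symmetric differentials that controls the degree bounds.

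For the concluding assertion, set $a = 1$: bigness of $\OO_{\PP\Omega_X}(1) \otimes \pi^*\OO_X(-N-1)$ gives, via the first part, that $Y := \pi(B(\OO_{\PP\Omega_X}(1) \otimes \pi^*\OO_X(-1))) \subset X$ has codimension $\geq 2$. For $k$ sufficiently divisible, the stable base locus is attained at finite level, so $\text{Bs}(\OO_{\PP\Omega_X}(k) \otimes \pi^*\OO_X(-k))$ projects to $Y$, and hence $\Sym^k(\Omega_X \otimes \OO_X(-1)) = \Sym^k \Omega_X \otimes \OO_X(-k)$ is globally generated off $Y$. Applying the criterion recalled just before the theorem with $E = \Omega_X \otimes \OO_X(-1)$ and $H = \OO_X(1)$ yields ampleness of $\Omega_X = E \otimes H$ outside $Y$.
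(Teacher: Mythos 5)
First, a point of reference: the paper does not prove Theorem \ref{thm-BrotbekCot} at all --- it is quoted verbatim from Brotbek \cite{BrotbekCot} (Theorem 4.5 and Corollary 4.7), so the comparison here is really against Brotbek's argument. Your bookkeeping steps are fine: the containment $B(L) \subset B(L')$ for $L = L' \otimes \pi^*\OO_X(N)$ via multiplication by sections of the very ample $\OO_X(mN)$ is correct, and your final paragraph correctly deduces the ``Thus\dots'' sentence from the first sentence with $a=1$ (modulo conflating global generation of the vector bundle $\Sym^k\Omega_X(-k)$ with base-point-freeness of $\OO_{\PP\Omega_X}(k)\otimes\pi^*\OO_X(-k)$ over $X\setminus Y$; only the latter follows, but it is also all that the ampleness criterion requires). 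Note, however, that these reductions give essentially nothing toward the main claim: $B(L')$ being a proper closed subset of the $(2n-1)$-dimensional space $\PP\Omega_X$ does not even force $\pi(B(L))\neq X$ once $n\geq 2$.

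The genuine gap is the entire middle step, which is where the theorem lives. Darondeau's theorem (and Merker's, which is the one actually relevant to the $-a-N$ twist in this statement --- Darondeau's improvement is what produces the $-a-3$ of Theorem \ref{thm-OurCot}) does \emph{not} ``furnish sections of $\pi_*(L'^m)=\Sym^m\Omega_X\otimes\OO_X(-(a+N)m)$ with non-vanishing behavior along $V$.'' It asserts global generation of the twisted \emph{tangent} bundle $T_{\PP(\Omega_{\uU/B})}\otimes\OO_{\PP^N}(N)\otimes\OO_B(1,\dots,1)$ of the universal family. Brotbek's proof uses this in the Siu--Merker fashion: a section of $L'^{\,m}$ on a general fiber extends to the family, and differentiating it up to $m$ times with these slanted vector fields --- each differentiation costing a twist by $\OO_{\PP^N}(N)$, which is exactly why the hypothesis is bigness of $\OO_{\PP\Omega_X}(1)\otimes\pi^*\OO_X(-a-N)$ rather than of $L$ itself --- produces new sections of $L^m$ on the fiber that cut the base locus down to the locus invariant under all such derivatives; the codimension-$2$ bound on the projection then comes out of the dimension count for that invariant locus. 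None of this mechanism appears in your sketch: you do not explain how a vector field on the universal space yields a section on a fiber, why one differentiates exactly $m$ times, or where the number $2$ comes from (your contradiction argument, as written, would seem to apply equally well to rule out codimension-$2$ components, which it cannot). As you yourself suspected, the ``Darondeau step'' is not an obstacle to be smoothed over later; it is the proof, and it is missing.
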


We now explain how a theorem of Darondeau allows us to remove the dependence on $N$ in Theorem \ref{thm-BrotbekCot}.  Let $\PP^{N_1} \times \dots \times \PP^{N_c}$ be the moduli space of all tuples of homogeneous polynomials $(f_1, \dots, f_c)$ of degrees $d_1, \dots, d_c$, respectively. Let $B \subset  \PP^{N_1} \times \dots \times \PP^{N_c}$ be the  Zariski open subset parameterizing tuples that intersect transversely and thus define smooth complete intersections of type $d_1, \dots, d_c$. Let  $\uU$ be the universal family over $B$, whose points parametrize tuples $(p,f_1,\dots,f_c)$ where $p \in V(f_1, \dots, f_c)$.

\begin{theorem}[Main Theorem, compact case from \cite{Darondeau}]
The vector bundle $T_{\PP(\Omega_{\uU/B})} \otimes \OO_{\PP^N}(3) \otimes \OO_{B} (1,\dots, 1)$ is globally generated.
\end{theorem}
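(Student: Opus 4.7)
My plan is to establish global generation of $T_{\PP(\Omega_{\uU/B})} \otimes \OO_{\PP^N}(3) \otimes \OO_B(1,\ldots,1)$ by constructing three explicit families of sections, one for each ``direction'' in the tower of projections $\PP(\Omega_{\uU/B}) \to \uU \to B$, and then checking pointwise surjectivity of the evaluation map. The relevant tangent sequences are
$$0 \to T_{\PP(\Omega_{\uU/B})/\uU} \to T_{\PP(\Omega_{\uU/B})} \to \pi^* T_\uU \to 0 \qquad \text{and} \qquad 0 \to T_{\uU/B} \to T_\uU \to q^* T_B \to 0,$$
where $\pi : \PP(\Omega_{\uU/B}) \to \uU$ and $q : \uU \to B$. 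It suffices to globally generate each graded piece with the claimed twist.

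For the $q^*T_B$ factor, the Euler sequences on each $\PP^{N_i}$ show that $T_B \otimes \OO_B(1,\ldots,1)$ is globally generated, and these sections lift tautologically to $\uU$ and then to $\PP(\Omega_{\uU/B})$. For the $T_{\uU/B}$ factor, I would start with a global linear vector field $v$ on $\PP^N$ (an element of the $((N+1)^2-1)$-dimensional space $H^0(T_{\PP^N})$ read off from the Euler sequence). Such a $v$ does not preserve $\uU$, but its failure is exactly $v(f_i) \in H^0(\OO_{\PP^N}(d_i))$, which is itself a tangent direction at $[f_i] \in \PP^{N_i}$ and can therefore be absorbed by a motion in $B$. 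The resulting vector field lies in $T_\uU \otimes \OO_{\PP^N}(1) \otimes \OO_B(1,\ldots,1)$, the $\OO_{\PP^N}(1)$ accounting for the Euler denominator in realizing $v$ globally.

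The genuinely delicate step is generating the relative direction $T_{\PP(\Omega_{\uU/B})/\uU}$, which parametrizes rotations of a chosen cotangent vector within its fiber. Here I would pass to \emph{quadratic} vector fields on $\PP^N$ (products of a linear vector field with a linear form), computing their Lie derivative action on $\Omega_{\PP^N}|_\uU$ and then projecting to $\Omega_{\uU/B}$ via the conormal sequence
$$0 \to \bigoplus_i \OO_\uU(-d_i) \to \Omega_{\PP^N}|_\uU \to \Omega_{\uU/B} \to 0.$$
The failure of the quadratic field to preserve the subbundle $\bigoplus_i \OO(-d_i)$ can again be corrected by a motion of each $f_i$, and the bookkeeping of twists assembles precisely into $\OO_{\PP^N}(3) \otimes \OO_B(1,\ldots,1)$: one factor of $\OO_{\PP^N}(1)$ from the Euler denominator, two more from promoting linear to quadratic $\PP^N$-vector fields, and $\OO_B(1,\ldots,1)$ from the single compensating displacement in each $\PP^{N_i}$.

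The main obstacle I anticipate is the pointwise surjectivity check for this third family at every $(p, f, [\xi]) \in \PP(\Omega_{\uU/B})$, in particular where the differentials $df_i(p)$ become linearly dependent in directions relevant to $\xi$. Transversality of the general complete intersection guarantees exactness of the conormal sequence at every point, so the compensation equations are always solvable, but one still has to verify that the image of the quadratic-lift evaluation surjects onto every one-dimensional fiber of $\PP(\Omega_{\uU/B}) \to \uU$ rather than merely hitting it generically. Once this final linear-algebra check is complete, the three families together globally generate $T_{\PP(\Omega_{\uU/B})}(3,1,\ldots,1)$, which is the claim.
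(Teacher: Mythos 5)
First, a point of context: the paper does not prove this statement. It is imported verbatim as the Main Theorem (compact case) of Darondeau's \emph{Slanted vector fields for jet spaces} \cite{Darondeau}, so there is no internal proof to compare against. That said, your outline does follow the strategy Darondeau (building on Clemens, Voisin, Siu, P\u{a}un and Merker) actually uses: filter $T_{\PP(\Omega_{\uU/B})}$ along the tower $\PP(\Omega_{\uU/B}) \to \uU \to B$, generate the parameter directions from the Euler sequences on the $\PP^{N_i}$, and generate the remaining directions by ambient vector fields on $\PP^N$ whose failure to preserve the universal family is absorbed by a compensating ``slanted'' motion of the coefficients of the $f_i$.

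As written, however, the proposal has a genuine gap, and you have located it yourself: the pointwise surjectivity of the evaluation map onto $T_{\PP(\Omega_{\uU/B})/\uU}$ (and onto $T_{\uU/B}$) at \emph{every} point $(p,f,[\xi])$, with only a degree-$3$ twist, is not a final verification to be deferred --- it is the entire content of the theorem. Everything else in your sketch (the two tangent sequences, the existence of compensating terms, the heuristic twist count ``$1+2=3$'') is formal; the reason the answer is $3$ rather than $N$ (Merker's bound, which is what Brotbek's Theorem 4.5 originally used and what this paper is replacing) or $2$ is an explicit computation: one must choose coordinates adapted to $(p,\xi)$, write down the corrected quadratic fields, and run a Cramer-type linear-algebra argument showing the span is full even at degenerate configurations, e.g.\ when $\xi$ has few nonzero coordinates or the $df_i(p)$ align badly with the chosen chart. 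Until that is carried out, the stated twist is asserted rather than proved. A second, smaller gap: ``it suffices to globally generate each graded piece'' is false for a general extension; you must produce honest global sections of $T_{\PP(\Omega_{\uU/B})}$ (suitably twisted) lifting your sections of the quotients, and already for the $q^*T_B$ piece the naive lift fails --- moving the equations forces a compensating motion of the point $p$ solving $\dot f(p) + df_p(\dot p)=0$, i.e.\ yet another slanted correction that your sketch omits. Since the result is external to this paper, the efficient course is simply to cite \cite{Darondeau}; if you want a self-contained proof, the missing surjectivity computation is where all the work lies.
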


By replacing Merker's bound (Theorem 4.9 in \cite{BrotbekCot}) with Darondeau's improved bound from \cite{Darondeau} in the proof of Theorem 4.5 in \cite{BrotbekCot}, one obtains the following.

\begin{theorem}\label{thm-OurCot}
Let $X \subset \PP^N$ be a general complete intersection of dimension $n$ and type $d_1, \dots, d_c$. If $\OO_{\PP \Omega_X}(1) \otimes \pi^* \OO_{X}(-a-3)$ on $\PP \Omega_X$ is big, then the projection of the stable  base locus of $\OO_{\PP \Omega_X}(1) \otimes \OO_X(-a)$ under $\pi$ has codimension at least 2 in $X$. Thus,  if $\OO_{\PP \Omega_X}(1) \otimes \pi^* \OO_{X}(-4)$ is big, then $\Omega_X$ is ample outside an algebraic set $Y$ of codimension at least 2 in $X$, where $Y$ is the image under $\pi$ of the stable base locus of $\OO_{\PP \Omega_X}(1) \otimes \pi^* \OO_{X}(-1)$.
\end{theorem}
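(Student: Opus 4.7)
The plan is to run Brotbek's proof of Theorem 4.5 in \cite{BrotbekCot} essentially verbatim, substituting Darondeau's global generation statement for Merker's bound (Theorem 4.9 in \cite{BrotbekCot}) that Brotbek uses. Since Merker's bound is the only place in Brotbek's argument where the integer $N$ enters, replacing it by Darondeau's constant $3$ propagates the same improvement through the entire proof, turning Brotbek's twists $-a-N$ and $-N-1$ into the asserted $-a-3$ and $-4$.

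More explicitly, I would set up Brotbek's relative picture: the universal family $\uU \to B$, the relative projectivized cotangent bundle $\Pi : \PP(\Omega_{\uU/B}) \to \uU$ with tautological quotient $\OO(1)$, and the projection $\rho : \uU \to B$. Darondeau's theorem exhibits $T_{\PP(\Omega_{\uU/B})} \otimes \OO_{\PP^N}(3) \otimes \OO_B(1,\dots,1)$ as globally generated, so any section of a line bundle on $\PP(\Omega_{\uU/B})$ may be differentiated along a $B$-horizontal direction at a per-derivation cost of one factor of $\OO_{\PP^N}(3)$ in the target twist (the $\OO_B(1,\dots,1)$ factor being trivial on any chosen fiber). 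Under the bigness hypothesis, for $k$ large there exist many sections of $\OO(k) \otimes \pi^*\OO_X(-k(a+3))$ on a general fiber $\PP\Omega_X$, spreading out over a neighborhood in $B$. Supposing for contradiction that the projected stable base locus of $\OO_{\PP\Omega_X}(1)\otimes \pi^*\OO_X(-a)$ contains a divisor $Y \subset X$, I would apply Brotbek's procedure of differentiating these sections along sufficiently many horizontal vector fields to land back on $\PP\Omega_X$ over a general $y \in Y$ with nonzero value. The budget of $3$ twists paid per derivation—rather than Brotbek's budget of $N$—is absorbed by the $-3$ gap between $-a-3$ and $-a$, producing a section of $\OO(k) \otimes \pi^*\OO_X(-ka)$ nonvanishing above $y$ and contradicting membership of $y$ in the projected stable base locus.

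The step I expect to be the main obstacle is the pure bookkeeping sanity check: verifying that in Brotbek's proof of Theorem 4.5 every appearance of $N$ really does trace back to Merker's estimate, with no auxiliary step secretly depending on $N$ for unrelated reasons. Once this is confirmed, the second assertion of the theorem is immediate by specializing $a = 1$: bigness of $\OO_{\PP\Omega_X}(1) \otimes \pi^*\OO_X(-4)$ forces the image under $\pi$ of the stable base locus of $\OO_{\PP\Omega_X}(1) \otimes \pi^*\OO_X(-1)$ to have codimension at least $2$, and by the observation following the definition of ampleness outside $T$, this exhibits $\Omega_X$ as ample outside the resulting codimension $2$ subvariety $Y$.
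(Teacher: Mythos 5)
Your proposal follows exactly the route the paper takes: the paper's entire justification for Theorem \ref{thm-OurCot} is the single remark that one substitutes Darondeau's global generation theorem (with its twist of $\OO_{\PP^N}(3)$) for Merker's bound in Brotbek's proof of Theorem 4.5 of \cite{BrotbekCot}, which is precisely your plan, and your specialization to $a=1$ for the second assertion matches as well. Your version is in fact more detailed than the paper's, since you spell out the horizontal-vector-field differentiation mechanism and flag the bookkeeping check that $N$ enters Brotbek's argument only through Merker's estimate.
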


In view of Theorem \ref{thm-OurCot}, we desire effective bounds on the degrees $d_i$ that guarantee that the line bundles  $\OO_{\PP \Omega_X}(1) \otimes \pi^* \OO_{X}(-3)$ and $\OO_{\PP \Omega_X}(1) \otimes \pi^* \OO_{X}(-4)$ are big. Recall the following  criterion for bigness of a line bundle.

\begin{theorem}\cite[Theorem 2.2.15]{LazarsfeldBook}\label{thm-bigness}
If $F$ and $G$ are nef line bundles on an $r$-dimensional variety and $F^r > r F^{r-1} \cdot G$, then $F-G$ is big.
\end{theorem}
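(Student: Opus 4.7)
The plan is to prove the asymptotic lower bound
\[
h^0\bigl(X, m(F-G)\bigr) \;\geq\; \frac{m^r}{r!}\bigl(F^r - r F^{r-1}\cdot G\bigr) + O(m^{r-1})
\]
for $m \to \infty$ along a suitable arithmetic progression. Once this is established, the strict hypothesis $F^r > r F^{r-1}\cdot G$ forces $h^0(X, m(F-G))$ to grow at the maximal rate $m^r$, which is exactly bigness of $F-G$.

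To derive the estimate, first pass to a resolution so $X$ is smooth; this preserves nefness, intersection products of pulled-back classes, and the bigness conclusion for the pullback. Fix any ample line bundle $H$ on $X$ and choose a small rational $\epsilon>0$ such that $F^r > r F^{r-1}(G + \epsilon H)$, which is possible by strict inequality. For $m$ sufficiently divisible, $m(G + \epsilon H)$ is integral and very ample, and by Bertini there is a smooth irreducible $D_m \in |m(G + \epsilon H)|$. The short exact sequence
\[
0 \to \OO_X\bigl(m(F-G) - m\epsilon H\bigr) \to \OO_X(mF) \to \OO_{D_m}(mF) \to 0
\]
gives
\[
h^0\bigl(X, m(F-G) - m\epsilon H\bigr) \;\geq\; h^0(X, mF) - h^0\bigl(D_m, mF|_{D_m}\bigr).
\]

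Now I apply asymptotic Riemann--Roch for nef line bundles. Since $F$ is nef on $X$, $h^0(X, mF) = \frac{F^r}{r!} m^r + O(m^{r-1})$. On the smooth $(r-1)$-fold $D_m$, using $[D_m] = m(G + \epsilon H)$ one computes $(F|_{D_m})^{r-1} = m\bigl(F^{r-1}\cdot G + \epsilon\, F^{r-1}\cdot H\bigr)$, and the parallel asymptotic bound gives
\[
h^0(D_m, mF|_{D_m}) \;\leq\; \frac{m^r}{(r-1)!}\bigl(F^{r-1}\cdot G + \epsilon\, F^{r-1}\cdot H\bigr) + O(m^{r-1}).
\]
Subtracting yields $h^0(X, m(F-G) - m\epsilon H) \geq \frac{m^r}{r!}\bigl(F^r - r F^{r-1}G - r\epsilon F^{r-1}H\bigr) + O(m^{r-1}) > 0$ for $m \gg 0$ by the choice of $\epsilon$. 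Hence $m(F-G) \sim m\epsilon H + E$ for some effective $E$; since $m\epsilon H$ is ample, $m(F-G)$ is ample plus effective, and $F - G$ is big.

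The main obstacle is the uniform-in-$m$ upper bound on $h^0(D_m, mF|_{D_m})$, since $D_m$ itself varies with $m$. The crucial input is adjunction: $K_{D_m} = (K_X + m(G + \epsilon H))|_{D_m}$, so the Todd class of $D_m$ is a polynomial expression in $m$, and Hirzebruch--Riemann--Roch on $D_m$ presents $\chi(D_m, mF|_{D_m})$ as a polynomial in $m$ of total degree $r$ with leading coefficient $\frac{F^{r-1}\cdot(G + \epsilon H)}{(r-1)!}$; nefness of $mF|_{D_m}$ controls the higher cohomology as lower-order in $m$, giving the required uniform bound. Past this bookkeeping, the argument is purely formal.
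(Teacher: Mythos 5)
This statement is quoted from Lazarsfeld's book (it is Siu's numerical criterion for bigness) and the paper supplies no proof of its own, so the comparison is with the standard argument the citation points to. Your overall strategy --- perturb $G$ by $\epsilon H$, restrict sections of $mF$ to divisors representing the subtracted class, and count via asymptotic Riemann--Roch --- is the right one, but there is a genuine gap at exactly the step you flag as the main obstacle: the upper bound on $h^0(D_m, mF|_{D_m})$ for a \emph{single} divisor $D_m \in |m(G+\epsilon H)|$. Writing $G' = G+\epsilon H$, the restriction sequence you use gives $\chi(D_m, mF|_{D_m}) = \chi(X, mF) - \chi(X, m(F-G'))$, whose $m^r$-coefficient is $\frac{1}{r!}\bigl(F^r - (F-G')^r\bigr) = \frac{1}{(r-1)!}F^{r-1}\cdot G' - \frac{1}{2(r-2)!}F^{r-2}\cdot G'^2 + \cdots$, not $\frac{1}{(r-1)!}F^{r-1}\cdot G'$ as you claim. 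The extra degree-$r$ terms come precisely from the $m$-dependence of the Todd class of $D_m$: for instance $\mathrm{td}_1(D_m)$ contains $-\frac{m}{2}G'|_{D_m}$, which after pairing with $(mF)^{r-2}$ and integrating over the class $mG'$ contributes at order $m^r$. Consequently, even if one grants uniform control of higher cohomology on the varying varieties $D_m$ (itself not automatic: the bound $h^i(Y,kL)=O(k^{\dim Y - i})$ for nef $L$ has constants depending on $Y$, and here both the variety and the bundle change with $m$), the single-divisor computation only reproduces $h^0\bigl(X, m(F-G')\bigr) \gtrsim \frac{m^r}{r!}(F-G')^r + O(m^{r-1})$, i.e.\ the Euler characteristic of $F-G'$ itself. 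Positivity of $(F-G')^r$ is \emph{not} implied by the hypothesis $F^r > rF^{r-1}\cdot G$ (the higher binomial terms $F^{r-k}G'^k$ can dominate), so the argument does not close.

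The repair is Siu's device, which is what Lazarsfeld's proof does: after making $G'$ very ample, choose $m$ general \emph{distinct} divisors $E_1,\dots,E_m$ in the \emph{fixed} linear system $|G'|$ and restrict one at a time, obtaining $h^0(X, mF - E_1-\cdots-E_m) \geq h^0(X,mF) - \sum_{i=1}^m h^0(E_i, mF|_{E_i})$. Because each $E_i$ lies in one fixed bounded family of $(r-1)$-folds, asymptotic Riemann--Roch applies with a uniform constant and gives $h^0(E_i, mF|_{E_i}) \leq \frac{F^{r-1}\cdot G'}{(r-1)!}m^{r-1} + O(m^{r-2})$; summing the $m$ contributions yields only the first-order term $\frac{m^r}{(r-1)!}F^{r-1}\cdot G'$, with no higher binomial corrections --- which is exactly the quantity the hypothesis is designed to beat. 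The remaining parts of your write-up (reduction to a resolution, the $\epsilon$-perturbation, Bertini, and the concluding step that $m^r$-growth of $h^0$ plus an ample summand gives bigness) are fine.
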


We will use the following proposition from Brotbek.

\begin{proposition}\cite[Proposition 4.2]{BrotbekCot}\label{prop-omega2}
Let $Y \subset \PP^N$ be a smooth projective variety. The bundle $\Omega_Y(2)$ is ample if and only if  $Y$ does not contain lines.
\end{proposition}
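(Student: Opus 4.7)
The strategy is to realize $\PP(\Omega_Y(2))$ as a parameter space of pointed tangent lines to $Y$ and to identify the morphism defined by global sections of $\Omega_Y(2)$ with the Pl\"ucker embedding of the Grassmannian $\mathbb{G}(1,N)$ of lines in $\PP^N$.

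The ``only if'' direction is direct: if $L \subset Y$ is a line, the conormal sequence for $L \subset Y$ gives a surjection $\Omega_Y(2)|_L \twoheadrightarrow \Omega_L(2) \cong \OO_L$, so $\Omega_Y(2)|_L$ has a trivial line bundle quotient and therefore cannot be ample; hence $\Omega_Y(2)$ is not ample.

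For the converse, suppose $Y$ contains no line. The dual Euler sequence on $\PP^N$ shows that $\Omega_{\PP^N}(2)$ is globally generated by the sections $\omega_{ij} = x_i dx_j - x_j dx_i$; passing to $Y$ via the conormal surjection $\Omega_{\PP^N}(2)|_Y \twoheadrightarrow \Omega_Y(2)$, we conclude that $\Omega_Y(2)$ is likewise globally generated. The key observation is that the value of $\omega_{ij}$ at a point $(y, [v]) \in \PP(\Omega_{\PP^N}(2))$, viewed as a point $y \in \PP^N$ together with a tangent direction $[v]$, is $y_i v_j - y_j v_i$, the Pl\"ucker coordinate of the line $\ell_{y,v}$ through $y$ in direction $v$. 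Consequently the morphism $\PP(\Omega_{\PP^N}(2)) \to \PP^M$ defined by these sections factors as $(y, [v]) \mapsto \ell_{y,v}$ through the Pl\"ucker embedding of $\mathbb{G}(1,N) \subset \PP^M$.

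Restricting to the closed subscheme $\PP(\Omega_Y(2)) \hookrightarrow \PP(\Omega_{\PP^N}(2))|_Y$, the fiber of the induced morphism over $\ell \in \mathbb{G}(1,N)$ is the set of points $y \in Y \cap \ell$ at which $\ell$ is tangent to $Y$. Since $Y$ contains no line, every such fiber is finite, so the restricted morphism $\PP(\Omega_Y(2)) \to \mathbb{G}(1,N)$ is finite. Therefore $\OO_{\PP(\Omega_Y(2))}(1)$ is the pullback of an ample line bundle under a finite morphism, hence ample, which is precisely ampleness of $\Omega_Y(2)$. The main obstacle is the concrete identification of the section morphism with the Pl\"ucker map and the fiber analysis tying positive-dimensional fibers to lines contained in $Y$; once these are in hand, the rest is formal.
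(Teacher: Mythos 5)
Your argument is correct and is essentially the same as the proof of \cite[Proposition 4.2]{BrotbekCot}, which this paper cites rather than reproves: global generation of $\Omega_Y(2)$ via the Euler and conormal sequences, identification of the morphism given by the sections $x_i\,dx_j - x_j\,dx_i$ with the map $(y,[v])\mapsto \ell_{y,v}$ to the Pl\"ucker-embedded Grassmannian of lines, and the observation that this map has a positive-dimensional fiber exactly over lines contained in $Y$, while the easy direction follows from the trivial quotient $\Omega_Y(2)|_L \twoheadrightarrow \Omega_L(2)\cong\OO_L$. No gaps to report.
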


\begin{theorem}\label{thm-big}
Let $X \subset \PP^N$ be a smooth complete intersection of dimension $n$ and type $d_1,\dots, d_c$ with $c \geq n$. Let $a \geq -1$ be an integer.
If  $$ d_i \geq  \frac{n((2n-1)(a+2) +2 )}{N-2n+1}  + 2$$
 for all $i$, then $\OO_{\PP\Omega_X}(1) \otimes \pi^* \OO_X(-a)$ is big.
\end{theorem}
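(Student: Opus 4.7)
The plan is to apply the bigness criterion of Theorem \ref{thm-bigness} on the $(2n-1)$-dimensional variety $\PP\Omega_X$ to the pair of nef line bundles
$$F := \OO_{\PP\Omega_X}(1)\otimes\pi^*\OO_X(2), \qquad G := \pi^*\OO_X(a+2),$$
so that $F - G = \OO_{\PP\Omega_X}(1)\otimes\pi^*\OO_X(-a)$ is exactly the bundle in question. Nefness of $G$ is immediate because $a + 2 \ge 1$, while Proposition \ref{prop-omega2} gives that $F$ is ample, hence nef, provided $X$ contains no line; this is standard for a smooth complete intersection in the range $c \ge n$ with degrees at least those we consider. It then suffices to prove the numerical inequality $F^{2n-1} > (2n-1)\,F^{2n-2}\cdot G$ on $\PP\Omega_X$, that is, writing $\xi = c_1(\OO_{\PP\Omega_X}(1))$ and $h = \pi^*c_1(\OO_X(1))$,
$$(\xi + 2h)^{2n-1} > (2n-1)(a+2)\,(\xi + 2h)^{2n-2}\cdot h.$$

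Next I would expand both sides by the binomial theorem and reduce each intersection number to a computation on $X$ via the Grothendieck relation: for $0 \le j \le n$, $\pi_*(\xi^{n-1+j} h^{n-j}) = s_j(\Omega_X)\cdot h^{n-j}$, where $s_j(\Omega_X)$ is the $j$-th Segre class. Because $X$ is a complete intersection, the normal bundle sequence
$$0 \to T_X \to T_{\PP^N}|_X \to \bigoplus_i \OO_X(d_i) \to 0$$
yields the identity $c(T_X)\prod_i (1 + d_i h) = (1+h)^{N+1}$ in $H^*(X,\QQ)$, so each $s_j(\Omega_X)$ is an explicit universal polynomial in $1, d_1,\dots,d_c$ times $h^j$. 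Combined with $\int_X h^n = \prod_i d_i$, the inequality becomes a purely numerical polynomial inequality in the $d_i$.

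The main work, and the principal obstacle, is the combinatorial analysis of this polynomial inequality. The dominant contribution to $(\xi + 2h)^{2n-1}$ in the $d_i$'s should beat $(2n-1)(a+2)$ times the corresponding contribution to $(\xi + 2h)^{2n-2}\cdot h$; isolating these leading terms and bounding the error terms uniformly using $d_i \ge d_{\min}$ should yield precisely
$$(d_{\min} - 2)(N - 2n + 1) \;\ge\; n\bigl((2n-1)(a+2) + 2\bigr),$$
which is the stated hypothesis. The factor $N - 2n + 1 = c - n + 1$ appears naturally as the excess between the exponent $N+1$ in $(1+h)^{N+1}$ and the dimensions of $X$ and the fibers of $\pi$; the assumption $c \ge n$ guarantees this factor is positive, and hence that the degree bound is finite. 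The delicate point will be preserving the linear dependence on $a$ in the final estimate without sacrificing the sharp coefficients $(2n-1)$ and $n$.
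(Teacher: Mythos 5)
Your setup coincides with the paper's: the same choice of nef classes $F = \OO_{\PP\Omega_X}(1)\otimes\pi^*\OO_X(2)$ and $G = \pi^*\OO_X(a+2)$, ampleness of $F$ via Proposition \ref{prop-omega2} and the absence of lines, the criterion $F^{2n-1} > (2n-1)F^{2n-2}\cdot G$ from Theorem \ref{thm-bigness}, and the reduction to Segre classes computed from the Euler and conormal sequences. However, the proposal stops exactly where the real content of the theorem begins. The sentence asserting that isolating leading terms and ``bounding the error terms uniformly using $d_i \ge d_{\min}$ should yield precisely'' the stated bound is not an argument, and it is far from clear that a crude leading-term analysis works: the quantity to be shown positive, namely $b_n - ((2n-1)(a+2)+2)b_{n-1} + 2(2n-1)(a+2)b_{n-2}$ in the paper's notation, has large negative contributions, and a naive comparison of dominant monomials in the $d_i$ does not by itself control the alternating lower-order terms coming from the factor $(1-2H)$ and from the expansion of $(1-H)^{-(N+1)}$.

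What the paper actually does at this point, and what is missing from your proposal, is the following. First, it works with the twisted bundle $\Omega_X(2)$ directly, so that the Segre classes are organized as polynomials in $\phi_{k,d} = \epsilon_k(d_1-2,\dots,d_c-2)$; the shift to $d_i - 2$ is what ultimately produces the ``$+2$'' in the degree bound. Second, it groups the target expression as $\sum_{k=0}^n \binom{N+n-k}{N}\left(\phi_{k,d} - ((2n-1)(a+2)+2)\phi_{k-1,d} + 2(2n-1)(a+2)\phi_{k-2,d}\right)$ and reduces positivity to the single condition $\phi_{k,d}/\phi_{k-1,d} \ge (2n-1)(a+2)+2$ for all $1 \le k \le n$. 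Third, it proves Lemma \ref{lem-symmetricPolys}, the inequality $\epsilon_k/\epsilon_{k-1} \ge \frac{r-k+1}{k}\min_i\{x_i\}$, by showing that the ratio $\epsilon_k/\epsilon_{k-1}$ is monotone increasing in each variable via a coefficient-by-coefficient comparison of $\epsilon_{k-1}\hat\epsilon_{k-1,i}$ with $\epsilon_k\hat\epsilon_{k-2,i}$. It is precisely the factor $\frac{c-k+1}{k}$ from this lemma, evaluated at the worst case $k=n$ with $c = N-n$, that produces the coefficient $\frac{n}{N-2n+1}$ in the final bound. Without this lemma, or an equivalent quantitative control on ratios of consecutive elementary symmetric functions, the combinatorial inequality you defer cannot be closed, so the proposal as written has a genuine gap at the heart of the proof. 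A minor additional point: the absence of lines on $X$ is a property of the \emph{general} complete intersection of the given degrees, not of every smooth one, so the appeal to Proposition \ref{prop-omega2} should be phrased accordingly (the paper has the same imprecision).
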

\begin{proof}
Under our assumptions on $d_i$, the general complete intersection $X$ does not contain any lines. Consequently, by Brotbek's Proposition \ref{prop-omega2}, $\Omega_X(2)$ is ample. Equivalently, the line bundle $F= \OO_{\PP \Omega_X}(1) \otimes \pi^* \OO_X(2)$ is ample on $\PP \Omega_X$.  The line bundle $G = \pi^* \OO_{X}(a+2)$ is nef on $\PP \Omega_X$ being the pullback of a nef line bundle on $X$. By Theorem \ref{thm-bigness},  $\OO_{\PP \Omega_X}(1) \otimes \pi^* \OO_{X}(-a)$ is big if $$F^{2n-1} > (2n-1) F^{2n-2} \cdot G.$$

Recall that the Segre classes of a rank $r$ vector bundle $E$ are defined by $$s_i(E) = \pi_* ((c_1(\OO_{\PP E}(1))^{r-1+i}).$$ Thus, $F^{2n-1} = s_{n}(\Omega_X(2))$ and by push-pull, $F^{2n-2} \cdot G = s_{n-1}(\Omega_X(2)) \cdot (a+2)H$. Let $s(E)$ denote the total Segre class of $E$.

The Euler sequence on $\PP^N$ twisted by $\OO_{\PP^N}(2)$ 
\[ 0 \to \Omega_{\PP^N}(2) \to \OO(1)^{N+1} \to \OO(2) \to 0 \]
implies that
\[ s(\Omega_{\PP^N}(2)) = \frac{1-2H}{(1-H)^{N+1}}. \]  
The conormal sequence for $X$
\[ 0 \to \bigoplus_{i=1}^{c} \OO(-d_i+2) \to \Omega_{\PP^N}(2)|_X \to \Omega_X(2) \to 0  \]
yields
\[ s(\Omega_X(2)) = \frac{(1-2H)\prod_{i=1}^c(1+(d_i-2)H)}{(1-H)^{N+1}} .\]

Let $\epsilon_k(x_1, \dots, x_{c}) = \sum_{i_1< \cdots < i_k} x_{i_1} \cdots x_{i_k}$ denote the $k$th elementary symmetric function in $x_1, \dots, x_{c}$. For an $r$-tuple $d=(d_1, \dots, d_c)$, let $$\phi_{k,d} = \epsilon_k(d_1-2, \dots, d_c -2).$$
 Since $$\frac{1}{(1-x)^{N+1}} = \frac{d^N}{dx^N} \left( \frac{1}{N!} \frac{1}{1-x} \right) =  \frac{d^N}{dx^N} \left(\frac{1}{N!} \sum_{i \geq 0} x^i\right) = \sum_{i\geq 0} \binom{i+N}{N} x^i,$$ we obtain the relation
\[  s(\Omega_X(2)) = (1-2H) \left(\sum_{i \geq 0} \phi_{i,d} H^i \right) \left( \sum_{i \geq 0} \binom{i+N}{N} H^i \right) .\]
For our purposes, we only need $s_{n}(\Omega_X(2))$ and $s_{n-1}(\Omega_X(2))$. Then 
\[ s(\Omega_X(2))= (1-2H)(\dots + b_{n-2} H^{n-2} + b_{n-1}H^{n-1} + b_{n}H^{n}), \]
where
\[ b_{n} = \sum_{k=0}^{n} \phi_{k,d} \binom{N+n-k}{N} \]
\[ b_{n-1} = \sum_{k=0}^{n-1} \phi_{k,d} \binom{N+n-k-1}{N} \]
\[ b_{n-2} = \sum_{k=0}^{n-2} \phi_{k,d} \binom{N+n-k-2}{N} .\]
Then we have $$s_{n} = (b_{n} - 2b_{n-1})H^{n} \quad \mbox{and} \quad s_{n-1} = (b_{n-1} - 2b_{n-2})H^{n-1}.$$

We would like to determine when $s_{n} - (2n-1)(a+2)s_{n-1}$ is positive. This quantity equals
\[b_{n} - ((2n-1)(a+2) +2 )b_{n-1} + 2(2n-1)(a+2)b_{n-2} . \]
Expanding out this expression using the convention that $\phi_{k,d}=0$ for $k < 0$, we obtain 
\begin{equation}\label{eq1}
\sum_{k=0}^n \binom{N+n-k}{N} \left( \phi_{k,d} - ((2n-1)(a+2)+2) \phi_{k-1,d} + 2(2n-1)(a+2) \phi_{k-2, d} \right)
\end{equation}
This quantity is positive if 
$$\frac{\phi_{k,d}}{\phi_{k-1,d}} \geq  ((2n-1)(a+2)+2)$$ for all $1 \leq k \leq n$. 
Lemma \ref{lem-symmetricPolys} shows that  $$\frac{\phi_{k,d}}{\phi_{k-1,d}} \geq \frac{c-k+1}{k} \min_i\{d_i -2\}. $$
Hence, the quantity (\ref{eq1}) is positive if
$$\frac{c-k+1}{k} \min_i\{d_i -2\} \geq ((2n-1)(a+2)+2)$$ for all $1 \leq k \leq n$. 
Recalling that $c= N-n$, this inequality is satisfied for $1 \leq k \leq n$ when 
$$d_i \geq \frac{n}{N-2n+1} ((2n-1)(a+2) +2 ) + 2. $$ This concludes the proof of the theorem modulo the proof of Lemma \ref{lem-symmetricPolys}.
\end{proof}

\begin{lemma}
\label{lem-symmetricPolys}
Let $k < r$ and let $x_i$ be positive real numbers. Then the following inequality holds
\[ \frac{\epsilon_k(x_1,\dots,x_r)}{\epsilon_{k-1}(x_1, \dots, x_r)} \geq  \frac{r-k+1}{k} \min_i\{ x_i \} . \]
\end{lemma}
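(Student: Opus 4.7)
The plan is to exploit a simple combinatorial identity relating $\epsilon_k$ and $\epsilon_{k-1}$, and then use the pigeonhole-type observation that a sum of $r-k+1$ positive terms is at least $(r-k+1)$ times the minimum of those terms.

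First I would write each subset $S \subseteq \{1,\dots,r\}$ of size $k$ as $S = T \cup \{j\}$ where $T$ has size $k-1$ and $j \notin T$. Each such $S$ arises in exactly $k$ ways (by picking which element of $S$ to be $j$), which gives the identity
\[ k\,\epsilon_k(x_1,\dots,x_r) \;=\; \sum_{\substack{T \subset \{1,\dots,r\} \\ |T|=k-1}} \Bigl( \prod_{i \in T} x_i \Bigr) \Bigl( \sum_{j \notin T} x_j \Bigr). \]
For each $T$ of size $k-1$, the inner sum $\sum_{j \notin T} x_j$ has exactly $r-k+1$ positive terms, and is therefore bounded below by $(r-k+1)\min_i\{x_i\}$.

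Substituting this lower bound and factoring out the constant yields
\[ k\,\epsilon_k(x_1,\dots,x_r) \;\geq\; (r-k+1)\,\min_i\{x_i\} \sum_{|T|=k-1} \prod_{i \in T} x_i \;=\; (r-k+1)\,\min_i\{x_i\}\,\epsilon_{k-1}(x_1,\dots,x_r), \]
and dividing by $k\,\epsilon_{k-1}(x_1,\dots,x_r)$ (which is strictly positive since all $x_i > 0$) gives the desired inequality.

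There is no real obstacle here; the only thing to get right is the combinatorial identity, and the positivity hypothesis $x_i > 0$ is used both to ensure $\epsilon_{k-1} > 0$ (so division is legal) and to guarantee that the lower bound $\sum_{j \notin T} x_j \geq (r-k+1)\min_i x_i$ holds termwise.
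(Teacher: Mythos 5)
Your proof is correct, and it takes a genuinely different and more economical route than the paper. The paper first proves that the ratio $\epsilon_k/\epsilon_{k-1}$ is monotonically increasing in each variable $x_i$ --- this requires a somewhat delicate coefficient-by-coefficient comparison of the monomials appearing in $\epsilon_{k-1}\hat{\epsilon}_{k-1,i}$ and $\epsilon_k\hat{\epsilon}_{k-2,i}$ --- and then evaluates the ratio at the point where all variables equal $\min_i\{x_i\}$, obtaining $\binom{r}{k}x^k/\binom{r}{k-1}x^{k-1} = \frac{r-k+1}{k}x$. You instead use the double-counting identity
\[ k\,\epsilon_k = \sum_{|T|=k-1}\Bigl(\prod_{i\in T}x_i\Bigr)\Bigl(\sum_{j\notin T}x_j\Bigr), \]
bound the inner sum of $r-k+1$ positive terms below by $(r-k+1)\min_i\{x_i\}$, and divide. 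Your identity is correctly justified (each $k$-set $S$ arises from exactly $k$ pairs $(T,j)$), and the termwise bound and the division by $\epsilon_{k-1}>0$ are both legitimate under the hypothesis $x_i>0$. What the paper's longer argument buys is the stronger intermediate fact that the ratio is monotone in each variable, but that fact is not needed anywhere else in the paper, so your shorter proof suffices for the application and reaches the same sharp constant (equality when all $x_i$ coincide).
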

\begin{proof}
First, we show that the quotient $\epsilon_k / \epsilon_{k-1}$ is an increasing function in $x_i$.  This allows us to replace all of the $x_i$ with $\min \{x_i\}$.  Recall that $$\frac{\partial}{\partial x_i} \epsilon_k(x_1, \dots, x_r) = \epsilon_{k-1}(x_1, \dots, \hat{x}_i, \dots, x_r).$$ For simplicity, denote $\epsilon_k(x_1, \dots, x_r)$ by $\epsilon_k$ and $\epsilon_k(x_1, \dots, \hat{x}_i, \dots, x_r)$ by $\hat{\epsilon}_{k, i}$. 
Hence, 
\[ \frac{\partial}{\partial x_i} \frac{\epsilon_k(x_1,\dots,x_r)}{\epsilon_{k-1}(x_1, \dots, x_r)} = \frac{\epsilon_{k-1}\hat{\epsilon}_{k-1,i} - \epsilon_k \hat{\epsilon}_{k-2, i}}{\epsilon_{k-1}^2} .\]
We would like to show this quantity is positive. It suffices to show the numerator is positive. We compute the coefficient of $\prod_{j=1}^r x_j^{a_j}$ in $\epsilon_{k-1}\hat{\epsilon}_{k-1,i}$ and $\epsilon_k \hat{\epsilon}_{k-2, i}$. 

First, both coefficients are zero  unless $$0 \leq a_j \leq 2 \ \mbox{ for all} \ j \not= i,\  0 \leq a_i \leq 1, \ \mbox{ and} \  \sum_{j=1}^{r} a_j = 2k-2.$$ Let $S$ be the set of $j$ such that $a_j =2$ and let $|S|=m$. Let $I \subset \{1, \dots, r\}$ be the set of $j$ such that $a_j = 1$. 

If $i \in I$, then the coefficient of $\prod_{j=1}^r x_j^{a_j}$ in $\epsilon_{k-1}\hat{\epsilon}_{k-1,i}$ is given by $\binom{2k-3 - 2m}{k-2-m}$. This is the number of ways of writing $\prod_{j=1}^r x_j^{a_j}$ as a product of two monomials $m_1 m_2$ of length $k_1$ such that the terms in $m_1$ and $m_2$ are all distinct and $x_i \mid m_1$. Since the terms in $m_1$ and $m_2$ are distinct, $x_j | m_1$ and $x_j | m_2$ for $j \in S$. Hence, the coefficient is given by the number of ways of choosing $k-2-m$ elements in $I \backslash \{i\}$. 

Similarly, if $i \in I$, the coefficient of $\prod_{j=1}^r x_j^{a_j}$ in $\epsilon_k \hat{\epsilon}_{k-2, i}$ is given by $\binom{2k-3 - 2m}{k-1-m}$. This corresponds to choosing $k-1-m$ elements out of $I \backslash \{i\}$. Hence, when $i \in I$, the coefficients $\prod_{j=1}^r x_j^{a_j}$ in $\epsilon_{k-1}\hat{\epsilon}_{k-1,i}$ and  $\epsilon_k \hat{\epsilon}_{k-2, i}$ are equal.

By similar reasoning, if $i \not\in I$, then the coefficient of $\prod_{j=1}^r x_j^{a_j}$ in $\epsilon_{k-1}\hat{\epsilon}_{k-1,i}$ is given by $\binom{2k-2 - 2m}{k-1-m}$, with the convention that $\binom{0}{0}=1$. The coefficient of $\prod_{j=1}^r x_j^{a_j}$ in $\epsilon_k \hat{\epsilon}_{k-2, i}$ is given by $\binom{2k-2 - 2m}{k-m}$. Since $\binom{2k-2 - 2m}{k-1-m}> \binom{2k-2 - 2m}{k-m},$ we conclude that the numerator is positive. 

Hence, the quotient  $\frac{\epsilon_k(x_1,\dots,x_r)}{\epsilon_{k-1}(x_1, \dots, x_r)}$ increases as $x_i$ increases. Let $x = \min\{x_i\}$. Hence, we get a lower bound for the quotient by setting each of the $x_i=x$. We obtain $\epsilon_k(x,\dots, x) = \binom{r}{k} x^k$. This gives
\[ \frac{\epsilon_k(x_1,\dots,x_r)}{\epsilon_{k-1}(x_1, \dots, x_r)} \geq \frac{\binom{r}{k}x^k}{\binom{r}{k-1}x^{k-1}} = \frac{r-k+1}{k} x  \] This concludes the proof of the lemma.
\end{proof}

Combining Theorem \ref{thm-OurCot} and Theorem \ref{thm-big}, we obtain the following corollary.

\begin{corollary}\label{cor-mainbig}
Let $X \subset \PP^N$ be a general complete intersection of dimension $n$ and type $d_1,\dots, d_c$ with $c \geq n$.  If 
$$d_i \geq \frac{(2n^2-n)(a+5)+2n}{N-2n+1}+2$$ for all $i$, then the projection of the stable base locus of $\OO_{\PP \Omega_X}(1) \otimes \OO(-a)$ has codimension at least 2 in $X$. In particular, if 
  $$d_i \geq \frac{12n^2-4n}{N-2n+1}+2$$ for all $i$, then the projection of the stable base locus of $\OO_{\PP \Omega_X}(1) \otimes \pi^* \OO_X(-1)$ has codimension at least 2 in $X$, which implies $\Omega_X$ is ample outside a variety of codimension at least 2 in $X$.
\end{corollary}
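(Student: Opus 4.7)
The plan is to obtain the corollary by chaining Theorem \ref{thm-OurCot} with Theorem \ref{thm-big} after a suitable shift in the parameter. The substantive content is already packaged in those two results, so the corollary is essentially bookkeeping.

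First, I would recall the logical structure of Theorem \ref{thm-OurCot}: to conclude that the projection of the stable base locus of $\OO_{\PP \Omega_X}(1) \otimes \pi^* \OO_X(-a)$ has codimension at least $2$ in $X$, it suffices to know that the twisted line bundle $\OO_{\PP \Omega_X}(1) \otimes \pi^* \OO_X(-a-3)$ is big on $\PP\Omega_X$. Then Theorem \ref{thm-big}, applied with $a$ replaced by $a+3$, gives a sufficient degree bound for this bigness, namely
\[
d_i \;\geq\; \frac{n\bigl((2n-1)(a+5)+2\bigr)}{N-2n+1} + 2.
\]
Distributing the leading $n$ through $(2n-1)(a+5)$ gives $(2n^2-n)(a+5)$ in the numerator, and the residual $2n$ adds to it, yielding exactly the bound $d_i \geq \frac{(2n^2-n)(a+5)+2n}{N-2n+1}+2$ stated in the corollary. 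One should check that Theorem \ref{thm-big} applies, i.e.\ that $a+3 \geq -1$; this is automatic for $a \geq -1$, and in particular for the value $a=1$ that is specialized in the second statement.

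For the final clause, I would specialize to $a=1$: then $(2n^2-n)(a+5) + 2n = 6(2n^2-n) + 2n = 12n^2 - 4n$, which matches the simplified bound in the corollary. Under this hypothesis the stable base locus of $\OO_{\PP\Omega_X}(1) \otimes \pi^*\OO_X(-1)$ projects to a set of codimension at least $2$ in $X$, and this is precisely the criterion appearing in the second part of Theorem \ref{thm-OurCot} for $\Omega_X$ to be ample outside that set. So the ``ampleness outside codimension $2$'' conclusion is a direct quotation of Theorem \ref{thm-OurCot}.

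There is no real obstacle here: the combinatorial heart of the argument is Lemma \ref{lem-symmetricPolys}, the geometric input is Darondeau's global generation theorem (folded into Theorem \ref{thm-OurCot}), and the bigness input is the Segre-class calculation in Theorem \ref{thm-big}. The only thing to be careful about is the parameter shift by $3$ coming from Darondeau's improvement over Merker, and the resulting algebraic rearrangement of the degree bound; both are straightforward.
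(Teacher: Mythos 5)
Your proposal is correct and is exactly the argument the paper intends (the paper gives no separate proof, only the remark that the corollary follows by ``combining Theorem \ref{thm-OurCot} and Theorem \ref{thm-big}''): apply Theorem \ref{thm-big} with the parameter shifted to $a+3$, note that $n\bigl((2n-1)(a+5)+2\bigr)=(2n^2-n)(a+5)+2n$, feed the resulting bigness of $\OO_{\PP\Omega_X}(1)\otimes\pi^*\OO_X(-a-3)$ into Theorem \ref{thm-OurCot}, and specialize to $a=1$ for the final clause. The arithmetic checks out, including $6(2n^2-n)+2n=12n^2-4n$.
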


\section{Ampleness everywhere}\label{sec-main}
In this section, using a technique of Riedl and Yang introduced in \cite{RiedlYang} and further developed in \cite{RiedlYang2}, we remove the base locus at the expense of slightly worse bounds. 

For simplicity, let $d= (d_1, \dots, d_c)$. Let $\uU_{N,d}$ denote an open subvariety of the universal complete intersection parameterizing pairs $(p,X)$, where $X$ is a complete intersection in $\PP^N$ of dimension $n$ and type  $d_1, \dots, d_c$ and $p$ is a point of $X$. The main tool is the following theorem of Riedl and Yang.

\begin{theorem}\cite[Theorem 2.3]{RiedlYang2}
\label{thm-Grassmann}
Let $M$ and $t$ be  positive integers. Suppose that for every $N$, we have a countable union of locally closed subvarieties $Z_{N,d} \subset \uU_{N,d}$ satisfying the following two conditions:
\begin{enumerate}
\item The codimension of $Z_{M,d}$ in $\uU_{M,d}$ is at least $t$.
\item If $(p, X_0) \in Z_{N-1,d}$ is a linear section of $(p,X) \in \uU_{N,d}$, then $(p,X) \in Z_{N,d}$.
\end{enumerate}
Then for any $u \geq 0$, $Z_{M-u,d} \subset \uU_{M-u,d}$ has codimension at least $u+t$.
\end{theorem}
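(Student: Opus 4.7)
We proceed by induction on $u$. The base case $u = 0$ is precisely Condition (1). For the inductive step, set $N = M - u$ and assume $\codim_{\uU_{N, d}} Z_{N, d} \geq u + t$; the task is to show $\codim_{\uU_{N-1, d}} Z_{N-1, d} \geq u + t + 1$.

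To link the two codimensions, I would introduce the incidence variety
\[
\mathcal{I} = \{(p, X, H) : (p, X) \in \uU_{N, d},\ H \subset \PP^N \text{ a hyperplane},\ p \in H,\ X \pitchfork H\}
\]
together with the two projections $\pi: \mathcal{I} \to \uU_{N, d}$ forgetting $H$ (a $\PP^{N-1}$-bundle on its image) and $\sigma: \mathcal{I} \to \uU_{N-1, d}$ sending $(p, X, H)$ to $(p, X \cap H)$ under a chosen identification $H \cong \PP^{N-1}$. Since the $Z_{N-1, d}$ arising in the intended geometric applications is $PGL_N$-invariant, this choice is immaterial. By Condition (2), the containment $\sigma^{-1}(Z_{N-1, d}) \subseteq \pi^{-1}(Z_{N, d})$ holds in $\mathcal{I}$.

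Writing $R_N := \pi(\sigma^{-1}(Z_{N-1, d})) \subseteq Z_{N, d}$ and $s' = \codim Z_{N-1, d}$, the plan is to bound $\dim R_N$. Since $\sigma$ is a fibration, $\sigma^{-1}(Z_{N-1, d})$ has codimension $s'$ in $\mathcal{I}$. To extract $\codim R_N$ I would analyze the generic fiber of $\pi$ restricted to $\sigma^{-1}(Z_{N-1, d})$ over $R_N$. For a generic $(p', X) \in R_N$ this fiber is the set of hyperplanes $H \ni p'$ such that $(p', X \cap H) \in Z_{N-1, d}$; the key geometric point is that this set is a \emph{proper} subvariety of the full $\PP^{N-1}$ of hyperplanes through $p'$, hence has dimension at most $N - 2$. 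Indeed, $Z_{N-1, d}$ is properly contained in $\uU_{N-1, d}$, and the restriction map $H \mapsto (p', X \cap H)$ moves nontrivially as $H$ varies, so not every hyperplane section of a generic $(p', X)$ lands in $Z_{N-1, d}$. A direct dimension count then gives $\codim_{\uU_{N, d}} R_N \leq s' - 1$, and since $R_N \subseteq Z_{N, d}$, we conclude $\codim Z_{N, d} \leq \codim Z_{N-1, d} - 1$, yielding $\codim Z_{N-1, d} \geq \codim Z_{N, d} + 1 \geq u + t + 1$. The main technical obstacle is rigorously establishing that the fiber of $\pi|_{\sigma^{-1}(Z_{N-1, d})}$ over a generic point of $R_N$ (as opposed to a generic point of $\uU_{N, d}$) is strictly smaller than $\PP^{N-1}$; this is the delicate heart of the Riedl--Yang technique.
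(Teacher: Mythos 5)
The paper does not actually prove this statement; it is quoted verbatim from \cite[Theorem 2.3]{RiedlYang2}, so there is no in-paper argument to compare against and your proposal must stand on its own. Its overall architecture --- induction on $u$, the incidence variety $\mathcal{I}$ of triples $(p,X,H)$ with the two projections $\pi$ and $\sigma$, the inclusion $\sigma^{-1}(Z_{N-1,d})\subseteq\pi^{-1}(Z_{N,d})$ from condition (2), and the dimension count $\codim \pi(\sigma^{-1}(Z_{N-1,d}))=\codim Z_{N-1,d}+f-(N-1)$ where $f$ is the generic $\pi$-fiber dimension over the image --- is the right skeleton for the Riedl--Yang technique. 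The peripheral issues you raise (the choice of identification $H\cong\PP^{N-1}$, equidimensionality and irreducibility of the fibers of $\sigma$, working component by component on a countable union) are real but routine.

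The genuine gap is the step you yourself flag as ``the delicate heart'': the claim that for a generic $(p,X)$ in the image $R_N$, the set $B_{p,X}=\{H\ni p:(p,X\cap H)\in Z_{N-1,d}\}$ is a proper subvariety of the $\PP^{N-1}$ of hyperplanes through $p$. Your justification --- that $Z_{N-1,d}$ is properly contained in $\uU_{N-1,d}$ and the assignment $H\mapsto(p,X\cap H)$ ``moves nontrivially'' --- is a non sequitur: the hyperplane sections of a \emph{fixed} $(p,X)$ sweep out only an $(N-1)$-dimensional family, whereas $Z_{N-1,d}$ typically has dimension vastly larger than $N-1$, so there is no dimension-theoretic obstruction to \emph{every} hyperplane section through $p$ landing in $Z_{N-1,d}$. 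Moreover, this is not a removable technicality: conditions (1)--(2) place no restriction on $Z_{N-1,d}$ beyond being a countable union of locally closed sets (one can take $Z_{M-1,d}$ to be an arbitrary such $T$, propagate upward by $Z_{N,d}=\pi(\sigma^{-1}(Z_{N-1,d}))$, and set $Z_{N,d}=\emptyset$ below level $M-1$), so the theorem is essentially \emph{equivalent} to the assertion that $\codim\pi(\sigma^{-1}(T))\le\codim T-1$ for every irreducible locally closed $T\subsetneq\uU_{N-1,d}$, i.e.\ to your fiber bound $f\le N-2$. In other words, the entire content of the theorem is concentrated in precisely the step you leave unproved; what you have written is a correct reduction of the statement to an equivalent unproved claim, not a proof. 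To close the gap you would need the actual input of \cite{RiedlYang2}: an argument exploiting the structure of the extension spaces $\{X:X\cap H=X_0\}$ (affine spaces of restrictions of the defining equations) to show that a positive-codimension $T$ cannot absorb the full $\PP^{N-1}$ of sections of a generic member of the image.
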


Applying Theorem \ref{thm-Grassmann} to Corollary \ref{cor-mainbig}, we can obtain the main result of this note.

\begin{theorem}
Let $X \subset \PP^N$ be a general complete intersection of dimension $n$ and type $d_1, \dots, d_c$, and suppose $n > 1$.
\begin{enumerate}
\item If  $c \geq 2n-1$, $a \geq -1$ and 
\[ d_i \geq \frac{(8n^2-10n+3)a+ 40n^2 -46n+13}{N-3n+2} + 2 \]
for all $i$, then the stable base locus of $\OO_{\PP\Omega_X}(1) \otimes  \pi^*\OO_X(-a)$ is empty and some multiple is globally generated.
\item  If $c \geq 2n-2$ and
$$d_i \geq  \frac{(2n-2)(24n-28)}{N-3n+3}+2$$
for all $i$, then $\Omega_X$ is ample.
\end{enumerate}
\end{theorem}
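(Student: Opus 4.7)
Let $Y(X) \subset X$ denote the image under $\pi$ of the stable base locus of $\OO_{\PP\Omega_X}(1) \otimes \pi^* \OO_X(-a)$. The plan is to apply Theorem \ref{thm-Grassmann} to the family of loci
\[ Z_{N', d} := \{ (p, X) \in \uU_{N', d} : p \in Y(X) \}, \]
so that Corollary \ref{cor-mainbig} provides the codimension-$2$ input at an auxiliary ambient dimension $M > N$ and Theorem \ref{thm-Grassmann} upgrades this to a larger codimension bound on $Z_{N, d}$ in $\uU_{N, d}$.

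The main technical step is verifying the restriction condition (2) of Theorem \ref{thm-Grassmann}. For a smooth linear section $X_0 = X \cap H$, the conormal sequence yields a surjection $\Omega_X|_{X_0} \twoheadrightarrow \Omega_{X_0}$, hence a closed immersion $\PP\Omega_{X_0} \hookrightarrow \PP\Omega_X$ (over $X_0 \hookrightarrow X$) pulling $\OO_{\PP\Omega_X}(1)$ back to $\OO_{\PP\Omega_{X_0}}(1)$ and $\pi^*\OO_X(-a)$ back to $\pi^*\OO_{X_0}(-a)$. Thus sections of $L_X^{\otimes k} := (\OO_{\PP\Omega_X}(1) \otimes \pi^*\OO_X(-a))^{\otimes k}$ restrict to sections of $L_{X_0}^{\otimes k}$, so any $(p, v)$ in the stable base locus of $L_{X_0}$ remains in the stable base locus of $L_X$ through this inclusion, giving $(p, X_0) \in Z_{N-1, d} \Rightarrow (p, X) \in Z_{N, d}$.

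Now set $M = N + u$ for an integer $u \geq 0$, so $\dim X_M = n + u$. When $u \leq c - n$ and the degrees obey Corollary \ref{cor-mainbig}'s bound at $M$, $Z_{M, d}$ has codimension at least $2$ in $\uU_{M, d}$, and Theorem \ref{thm-Grassmann} raises this to codimension $\geq u + 2$ for $Z_{N, d}$ in $\uU_{N, d}$; equivalently, for a general $X \subset \PP^N$, $\dim Y(X) \leq n - u - 2$. For part (1), take $u = n - 1$ (forcing $c \geq 2n - 1$); then $Y(X) = \emptyset$, so $L_X$ is semi-ample, and substituting $u = n - 1$ into the bound of Corollary \ref{cor-mainbig} at $M$ recovers the inequality stated in part (1). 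For part (2), take $u = n - 2$ and $a = 1$ (forcing only $c \geq 2n - 2$, and valid because $n > 1$); then $Y(X)$ is a finite set, and a routine comparison shows the same bound also implies $\OO_{\PP\Omega_X}(1) \otimes \pi^*\OO_X(-4)$ is big (via Theorem \ref{thm-big}), so Theorem \ref{thm-OurCot} gives $\Omega_X$ ample outside $Y(X)$. Finiteness of $Y(X)$ immediately upgrades this to ampleness: any irreducible curve $C$ with $\pi(C) \subset Y(X)$ must lie in a single fiber of $\pi$, where $\OO_{\PP\Omega_X}(1)$ is relatively ample while $\pi^*\OO_X(1)$ has degree zero, ruling out any curve that would violate the ampleness criterion. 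I expect the restriction argument in step two to be the main subtlety; thereafter the optimal choice of $u$ and the translation of bounds are mechanical.
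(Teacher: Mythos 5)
Your proposal is correct and follows essentially the same route as the paper: the same surjection $\Omega_X|_{X\cap H}\twoheadrightarrow\Omega_{X\cap H}$ verifies condition (2) of Theorem \ref{thm-Grassmann}, the same choices $u=n-1$ and $u=n-2$ appear, the bound translations check out, and your curves-in-fibers argument for upgrading a finite non-ample locus to ampleness is exactly how the paper rules out a dominant projection in part (2). The one point you gloss over --- and which the paper addresses by fixing a single $k$, working with the base locus of $\left(\OO_{\PP\Omega_X}(1)\otimes\pi^*\OO_X(-a)\right)^{\otimes k}$ (equivalently, global generation of $\Sym^k(\Omega_X)(-ka)$), and restricting to the open locus where sections extend --- is that the stable base locus itself is a countable intersection whose behavior in families does not obviously present $Z_{N,d}$ as a countable union of locally closed subvarieties, as Theorem \ref{thm-Grassmann} requires; the fix is standard and does not affect the argument.
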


If $X$ is a curve,  $\Omega_X$ is a line bundle of degree $-N-1+\sum_i d_i$, which is globally generated if $\sum_i d_i \geq N+1$ and ample if $\sum_i d_i > N+1$.

\begin{proof}
By Corollary \ref{cor-mainbig}, if $M \geq 2m$ and 
$$d_i \geq \frac{(2m^2 -m)(a+5) + 2m}{M-2m+1}+ 2$$ then for some $k \gg 0$ we have $\Sym^k(\Omega_X)(-ka)$ is globally generated outside a subvariety of codimension at least $2$ for a general complete intersection $X \subset \PP^M$ of dimensinon $m$. Let $\uU_{N,d}$ be the subvariety of the universal complete intersection of type $d_1, \dots, d_c$ in $\PP^N$ consisting of pairs $(p,X)$ such that all sections of $\Sym^k(\Omega_X)(-ka)$ extend to the general complete intersection. Let $Z_{N,d}$ be the locus of points $(p,X)$ where $\Sym^k(\Omega_X)(-ka)$  is not globally generated. When $N=M$, $Z_{M,d}$ has codimension at least 2 in $\uU_{M,d}$, so satisfies (1) in Theorem \ref{thm-Grassmann} with $t=2$. 
Combining the restriction sequence $$0 \rightarrow \Omega_X(-1) \rightarrow \Omega_X \rightarrow \Omega_X|_{X \cap H} \rightarrow 0$$ and the conormal sequence $$0 \rightarrow \OO_{X\cap H} (-1) \rightarrow \Omega_X|_{X\cap H} \rightarrow \Omega_{X \cap H} \rightarrow 0,$$ we see that there is a surjective map $\Omega_X \rightarrow \Omega_{X \cap H} \rightarrow 0$. Consequently, we obtain a surjective map  $\Sym^k(\Omega_X)(-ka) \rightarrow \Sym^k(\Omega_{X \cap H})(-ka)$. Hence, if the latter is not globally generated at $p$, the former is certainly not globally generated at $p$ either. Hence, $Z_{N,d}$ satisfies  (2) in Theorem \ref{thm-Grassmann}. We conclude that $Z_{M - u, d}$ has codimension at least $u+2$ in $\uU_{M-u, d}$. If $u+2 > m-u$, then the projection of $Z_{M - u, d}$ to the space of complete intersections cannot be dominant. Letting $N= M-u$, $n= m-u$, $u=n-1$ and substituting into the degree bounds for $d_i$, we obtain the first statement.

Similarly, by Corollary \ref{cor-mainbig}, if
$$d_i \geq \frac{12m^2 -4m}{M-2m+1} + 2,$$ then $\Omega_X$ is ample outside a subvariety of codimension at least $2$. Let $Z_{N,d}$ be the locus of points $(p,X)$ where $\Omega_X$ fails to be ample. Then for $N=M$ this locus has codimension at least 2, so satisfies (1) in Theorem \ref{thm-Grassmann} with $t=2$. The surjection  $\Omega_X \rightarrow \Omega_{X \cap H} \rightarrow 0$ induces a map $\PP \Omega_{X \cap H} \rightarrow \PP \Omega_X$ such that the restriction of $\OO_{\PP \Omega_X}(1)$ to the image coincides with $\OO_{\PP \Omega_{X\cap H}}(1)$. Consequently, given a curve $C \in X\cap H$ passing through $p$ satisfying $\OO_{\PP \Omega_{X\cap H}}(1) \cdot C < \epsilon \ \pi^* H \cdot C$, the same curve satisfies  $\OO_{\PP \Omega_{X}}(1) \cdot C < \epsilon \ \pi^* H \cdot C$. Hence, $Z_{N,d}$ satisfies  (2) in Theorem \ref{thm-Grassmann}. We conclude that $Z_{M - u, d}$ has codimension at least $u+2$ in $\uU_{M-u, d}$. If $u+2 \geq m-u$, then the projection of $Z_{M - u, d}$ to the space of complete intersections cannot be dominant. If it were dominant, then the fibers would be finite. However, if the fibers are nonempty, then they have to be at least 1 dimensional since they contain curves. Letting $N= M-u$, $n=m-u$ and $u= n-2$ and substituting into the degree bounds for $d_i$, we obtain the second statement. Note that taking $u=n-2$ in this last step requires $n > 1$.
\end{proof}

\begin{corollary}\label{cor-fantastic}
Assume that $N \geq 48n^2 -101n +53$. Then the general complete intersection of dimension $n$ in $\PP^N$ of type $d_1, \dots, d_{N-n}$ has ample cotangent bundle if $d_i \geq 3$.
\end{corollary}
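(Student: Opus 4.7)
The plan is to deduce this corollary directly from part (2) of the main theorem of Section \ref{sec-main}, which asserts that $\Omega_X$ is ample for a general complete intersection when $c \geq 2n - 2$ and
\[
d_i \;\geq\; \frac{(2n-2)(24n-28)}{N-3n+3} + 2
\]
for all $i$. Given the hypothesis $d_i \geq 3$, the only thing to verify is that the numerical bound $N \geq 48n^2 - 101n + 53$ is exactly what is needed to force the right-hand side above to be at most $3$.

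I would do this in one algebraic step. Requiring
\[
\frac{(2n-2)(24n-28)}{N-3n+3} \;\leq\; 1
\]
is equivalent (since the denominator is positive for $N$ large) to
\[
N - 3n + 3 \;\geq\; (2n-2)(24n-28) \;=\; 48n^2 - 104n + 56,
\]
which rearranges to $N \geq 48n^2 - 101n + 53$. Thus the hypothesis on $N$ is precisely tuned so that $d_i \geq 3$ implies the degree bound of the main theorem.

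It remains to check the codimension hypothesis $c \geq 2n - 2$, i.e.\ $N \geq 3n - 2$. For $n \geq 2$ this is immediate, since $48n^2 - 101n + 53 \geq 3n - 2$ follows from $48n^2 \geq 104n - 55$, which holds for all $n \geq 2$. (The case $n = 1$ is either excluded, or handled by the curve remark immediately preceding the proof of the main theorem, since $\sum d_i \geq 3(N-1) > N+1$ as soon as $N > 2$.) No step here is a serious obstacle; the entire content of the corollary is that the quadratic-in-$n$ bound on $N$ has been chosen so that the degree bound from the main theorem collapses to the trivial condition $d_i \geq 3$.
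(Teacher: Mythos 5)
Your proposal is correct and follows exactly the route the paper intends: the corollary is an immediate specialization of part (2) of the main theorem, and your computation $(2n-2)(24n-28)=48n^2-104n+56$, hence $N-3n+3\geq 48n^2-104n+56\Leftrightarrow N\geq 48n^2-101n+53$, is precisely the arithmetic that makes the degree bound collapse to $d_i\geq 3$. The check that $c\geq 2n-2$ is automatic for such $N$ (and the aside on $n=1$) is a sensible addition.
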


\begin{remark}
Inspired by the case of curves, one could speculate that a complete intersection of dimension $n$ and type $d_1, \dots, d_c$ in $\PP^N$ will have ample cotangent bundle if $d_i \geq 2$ provided that $c \gg n$. 
\end{remark}

\bibliographystyle{plain}

\end{document}